\theoremstyle{plain}
\newtheorem{theorem}{Theorem}[section]
\newtheorem{proposition}[theorem]{Proposition}
\newtheorem{corollary}[theorem]{Corollary}
\theoremstyle{definition}
\newtheorem{example}[theorem]{Example}
\def\maru#1{{\ooalign{\hfill$\scriptstyle#1$\hfill\crcr$\bigcirc$}}}
\newcommand{\SF}{\mathscr{F}}
\title{Compound basis arising from the basic $A^{(1)}_{1}$-module }
\author{Kazuya Aokage,
 Hiroshi Mizukawa 
and Hiro-Fumi Yamada
}
\date{}
\address{Kazuya Aokage, Department of Mathematics, Okayama University, Okayama 700-8530, 
Japan}
\email{k.aokage@math.okayama-u.ac.jp}
\address{Hiroshi Mizukawa, Department of Mathematics,  National Defense
Academy in Japan,  Yokosuka 239-8686, Japan}
\email{mzh@nda.ac.jp}
\address{Hiro-Fumi Yamada, Department of Mathematics, Okayama University, Okayama 700-8530, 
Japan}
\email{yamada@math.okayama-u.ac.jp}
\date{}
\begin{document}

\begin{abstract}  
A new basis for the polynomial ring of infinitely many variables
is constructed which consists of products of Schur functions and $Q$-functions.
The transition matrix from the natural Schur function basis is investigated.
 \end{abstract}
\maketitle
\section{Introduction}
This note concerns with realizations of the basic representation of the affine Lie algebra of type
$A_{1}^{(1)}$(cf. \cite{kac}). The most well-known realization is $PU$, principal, untwisted, whose representation space is 
$$\SF^{PU}={\mathbb C}[t_{j};j \geq 1, {\rm odd}].$$ 
In the context of nonlinear integrable systems, this space appears as that of the $\mathrm{KdV}$ hierarchy.  
The second one is $HU$, homogeneous, untwisted, which is on 
$$\SF^{HU}=\bigoplus_{m \in {\mathbb Z}}\SF(m);\ \SF(m)={\mathbb C}[t_{j};j \geq 1]\otimes q^m.$$  
This space is for the $\mathrm{NLS}$ (nonlinear Schr\"{o}dinger) hierarchy
and also for the Fock representation of the Virasoro algebra(cf. \cite{iy}). 
The third one is $PT$, principal, twisted, on $\SF^{PT}$ which coincides with $\SF^{PU}$.
And the fourth one is $HT$, homogenous, twisted, on $\SF^{HT}$ which is the same as $\SF^{HU}$. 
The Lie algebra of type $A_{1}^{(1)}$ is isomorphic to that of type $D_{2}^{(2)}$.
One can discuss twisted realization of $A_{1}^{(1)}$-modules via this isomorphism.

The purpose of this note is to give a weight basis for $\SF^{HT}$ and compare it with a standard Schur function basis for $\SF^{HU}$. We will show that the transition matrix has several interesting combinatorial 
features. This is a detailed version of our announcement \cite{amy}.

\section{A quick review of realizations}
Let us first consider the principal untwisted realization on 
$\SF^{PU}={\mathbb C}[t_{j};j\geq 1, {\rm odd}]$.
To describe a weight basis for this space we need Schur functions and 
Schur's $Q$-functions in our setting. Let ${ P}_n$ be the set of all partitions of $n$ and put
$P=\bigcup_{n \geq 0} P_{n}$.
For $\lambda \in P_{n}$, the Schur function $S_{\lambda}(t)$ is defined by 
$$S_{\lambda}(t)=\sum_{\rho=(1^{m_{1}}2^{m_{2}}\ldots) \in P_{n}}\chi_{\rho}^{\lambda}\frac{t_{1}^{m_{1}}
t_{2}^{m_{2}}\cdots}{m_{1}!m_{2}!\cdots} ,$$
where the summation runs over all partitions $\rho=(1^{m_{1}}2^{m_{2}}\ldots)$ of $n$, 
and $\chi_{\rho}^{\lambda}$ is the irreducible character of the symmetric group ${\mathfrak S}_{n}$,
indexed by $\lambda$ and evaluated at the conjugacy class $\rho$. The Schur functions are the ordinary irreducible characters of the general linear groups. If the group element $g$ has eigenvalues $x_1,x_2, \ldots$, then the original irreducible character is recovered by putting $p_j:=jt_j$ $(j\geq 1)$, where $p_j=\sum_{i\geq 1}x_{i}^j$ is the $j$-th power sum of the eigenvalues.

The 2-reduction of a polynomial $f(t)$ is to ``kill" the even numbered variables $t_{2}, t_{4}, \ldots$,
i.e. , 
$$f^{(2)}(t)=f(t)|_{t_{2}= t_{4}=\ldots=0} \in \SF^{PU}.$$ 
The 2-reduced Schur functions are linearly dependent in general.
However all linear relations among  them  are known, and one can 
choose certain set  $P' \subset P$ so that
$\left\{ S_{\lambda}^{(2)} ; \lambda \in P'  \right\}$
forms a basis for $\SF^{PU}$ (cf. \cite{any}).

The space $\SF^{PU}$ also affords the principal twisted realization.
A weight basis is best described by Schur's $Q$-functions.
Let $SP_{n}$ (resp. $OP_{n}$) be the set of all strict (resp. odd)  partitions  
of $n$ and put $SP=\bigcup_{n \geq 0}SP_{n}$,
$OP=\bigcup_{n \geq 0}OP_{n}$.
For $\lambda \in SP_{n}$, the $Q$-function $Q_{\lambda}(t)$ is defined by 
$$Q_{\lambda}(t)=
\sum_{\rho=(1^{m_{1}}3^{m_{3}}\ldots) \in OP_{n}}
2^{\frac{\ell(\lambda)-\ell(\rho)+\epsilon}{2}}
\zeta_{\rho}^{\lambda}\frac{t_{1}^{m_{1}}
t_{3}^{m_{3}}\cdots}{m_{1}!m_{3}!\cdots} ,$$
where the summation runs over all odd partitions $\rho=(1^{m_{1}}3^{m_{3}}\ldots)$
of $n$, $\epsilon =0$ or $1$ according to that $\ell(\lambda)-\ell(\rho)$
is even or odd and $\zeta_{\rho}^{\lambda}$ is the irreducible spin 
character of  ${\mathfrak S}_{n}$,
indexed by $\lambda$ and evaluated at the conjugacy class $\rho$. For the $Q$-functions, we set $p_j:=\frac{1}{2}jt_j$ $(j\geq 1,\mathrm{odd})$ as the relation with the ``eigenvalues".
A more detailed account is found in \cite{ny}.
Here we remark the relation of $Q$-functions and the $P$-functions. We define inner product $\langle\  ,\ \rangle_{q}$ on $\SF(0)$ by $\langle p_{\lambda},p_{\mu}\rangle_{q}=z_{\lambda}(q)\delta_{\lambda \mu}$, where $z_{\lambda}(q)=z_{\lambda}\prod_{i \geq 1}(1-q^{\lambda_{i}})^{-1}$. Note that $z_{\lambda}(-1)$ cannot be defined for $\lambda$ which has even parts. Therefore we have to re-define $\langle\  ,\ \rangle_{-1}$ by setting  $\langle p_{\lambda},p_{\mu}\rangle_{-1}=2^{-\ell(\lambda)}z_{\lambda}\delta_{\lambda \mu}$. 
The $P$-functions are dual to the $Q$-functions with respect to the inner product $\langle\ ,\ \rangle_{-1}$ on $\SF^{PU}.$ For a strict partition $\lambda$, we see that $P_{\lambda}(t)=2^{-\ell(\lambda)}Q_{\lambda}(t)$ (cf. \cite{mac}).


In order to give the homogeneous, twisted realization we employ 
a combinatorics of strict partitions.
We introduce the following h-abacus.
For example, the h-abacus of   $\lambda=(11,10,5,3,2)$ is shown below. 
\begin{align*}
 {\begin{array}{ccc}
&{1}&\maru{3}\\
\maru{2}&&\\
4&\maru5&7\\
{6}&&\\
8&{9}&\maru{11}\\
\maru{10}&&\\
12&13&15\\
\vdots&\vdots&\vdots
\end{array}}
\end{align*}

From this h-abacus of $\lambda$ we read off a triplet  $(\lambda^{hc};\lambda^h[0],
\lambda^h[1])$ of partitions. Firstly $\lambda^h[0]=(5,1)$, a strict partition obtained just by 
taking halves of the circled positions of the leftmost column. 

For obtaining $\lambda^h[1]$, we need the following process:
\begin{enumerate}
\item
For the third column, the circled positions correspond to the vacancies "$\circ$".   
\item
For the second column, the circled positions correspond to being occupied "$\bullet$".  
\item
Read the third column from infinity to the position 3 and consequently the second 
column from the position $1$ to infinity, and draw the Maya diagram
$$
\begin{array}{ccccccccc}
\dots&15&11&7&3&1&5&9&\dots.\\
&\underline{\bullet}&\circ&\bullet&\circ&\circ&\bullet&\underline{\circ}&
\end{array}
$$
\item
For each $\bullet$, count the number of vacancies 
which are on the left of that $\bullet$, and get a partition 
$$\lambda^h[1]=(3,1).$$ 
\end{enumerate}

\ \ \ Next the h-core $\lambda^{hc}$ is obtained by the following moving and removing:
\begin{enumerate}
\item Remove all circles on the leftmost column.
\item Move a circle one position up along the second or the third column.
\item Remove the two circles at the positions 1 and 3 simultaneously.
\item The ``stalemate" determines the partition 
$$\lambda^{hc}=(3).$$
 \end{enumerate}

Note that $\lambda^{hc}$ is always of the form 
$$\Delta^h(m)=(4m-3,4m-7,\ldots,5,1){\ \rm or\ }\Delta^h(-m)=(4m-1,4m-5,\ldots,7,3)$$
for some $m \in {\mathbb N}\ (\Delta^h(0)=\emptyset )$. Let $HC$ be the set of 
all such $\lambda^{hc}$'s.
In this way we have a one-to-one correspondence between $\lambda \in SP$
and $(\lambda^{hc}; \lambda^h[0],\lambda^h[1]) \in HC \times SP \times P$  
with the condition 
$$|\lambda|=|\lambda^{hc}|+2(|\lambda^h[0]|+2|\lambda^h[1]|).$$
By making use of this one-to-one correspondence, we define the  
linear map $\eta:\SF^{PT}\rightarrow \SF^{HT}$ by
$$\eta(Q_{\lambda}(t))=Q_{\lambda^h[0]}(t)S_{\lambda^h[1]}(t')\otimes q^{m(\lambda)}. $$
Here $$m(\lambda)=({\rm number\ of\ circles\ on\ the\ second\ column} )-
({\rm number\ of\ circles\ on\ the\ third\ column} )$$ 
and $S_{\nu}(t')=S_{\nu}(t)|_{t_{j}\mapsto t_{2j}}$ for any $j \geq 1$.
For any integer $m$, the set 
$$\{\eta(Q_{\lambda});\lambda \in SP, m(\lambda)=m\}$$ 
forms a basis for $\SF(m)={\mathbb C}[t_{j}; j \geq 1]\otimes q^m $ (cf. \cite{imny}).
  Under the condition $m=0$, there is a one-to-one correspondence
  between  the following two sets for any 
  $n \geq 0$:
  \begin{enumerate}
  \item[(i)] $\{ \lambda \in SP_{2n}; \lambda^{hc}=\emptyset \},$
  \item[(ii)] $\{(\mu,\nu) \in SP_{n_{0}}\times P_{n_{1}}; n_{0}+2n_{1}=n\}.$ 
\end{enumerate}

\section{Compound basis}

We begin with some bijections between sets of partitions.
The first one is 
 $$\phi:P_{n} \longrightarrow  \bigcup_{n_{0}+2n_{1}=n}SP_{n_{0}}\times P_{n_{1}}$$
defined by
 $\lambda \mapsto (\lambda^{r},\lambda^{d}).$
Here the multiplicities $m_{i}(\lambda^{r})$ and
$m_{i}(\lambda^{d})$ of $i\geq 1$
are given respectively by
 \begin{align*}
  m_{i}(\lambda^{r})=
 \begin{cases}
 1  & m_{i}(\lambda) \equiv 1 \pmod 2\\
 0  & m_{i}(\lambda) \equiv 0 \pmod 2,
 \end{cases}
 \end{align*}
 and
  \begin{align*}
  m_{i}(\lambda^{d})=
 \begin{cases}
 \frac{1}{2}(m_{i}(\lambda)-1)  & m_{i}(\lambda) \equiv 1 \pmod 2\\
  \frac{1}{2}(m_{i}(\lambda))   & m_{i}(\lambda) \equiv 0 \pmod 2.
 \end{cases}
 \end{align*}
 For example, 
 if $\lambda=(5^34^42^71)$, then $\lambda^{r}=(521)$ and $\lambda^{d}=(54^22^3)$.
We set
 $$P_{n_{0},n_{1}}=\phi^{-1}(SP_{n_{0}}\times P_{n_{1}}).$$

The second bijection is 
$$\psi:P_{n} \longrightarrow \bigcup_{n_{1}+2n_{2}=n}OP_{n_{1}}\times P_{n_{2}}$$
defined by
$\psi(\lambda)=(\lambda^o,\lambda^e)$.
Here
 $\lambda^{o}$ is obtained by 
 picking up the odd parts of $\lambda$,
 while $\lambda^e$ is obtained by 
 taking halves of the even parts. For example,  
if  $\lambda=(5^34^42^71),$ then $\lambda^o=(5^31)$ and $\lambda^e=(2^41^7)$.

The third bijection is called the Glaisher map.
Let 
$\lambda=(\lambda_{1},\lambda_{2},\ldots)$
be a strict partition of $n$. Suppose that
$\lambda_{i}=2^{p_{i}}q_{i}\ (i=1,2,\ldots)$, where $q_{i}$ is odd.
Then an odd partition $\tilde{\lambda}$ of $n$ is defined by
$$m_{2j-1}(\tilde{\lambda})=\sum_{q_{i}=2j-1,i \geq 1}2^{p_{i}}.$$
For example, if 
$\lambda=(8,6,4,3,1),$ then $\tilde{\lambda}=(3^3,1^{13})$.
This gives a bijection between $SP_{n}$ and $OP_{n}$. 
\begin{proposition}
Let $(n_{0},n_{1})$ be fixed. Then we have
\begin{align*}
\sum_{\lambda \in P_{n}}\ell(\lambda)
 &=\sum_{\lambda \in P_{n}}(\ell(\lambda^{r})+2\ell(\lambda^{d}))
 =\sum_{\lambda \in P_{n}}(\ell(\lambda^o)+\ell(\lambda^e))
 =\sum_{\lambda \in P_{n}}(\ell(\tilde{\lambda^{r}})+\ell(\lambda^e)),\\
 \sum_{\lambda \in P_{n_{0},n_{1}}}\ell(\lambda)
 &=\sum_{\lambda \in P_{n_{0},n_{1}}}(\ell(\lambda^{r})+2\ell(\lambda^{d}))
 =\sum_{\lambda \in P_{n_{0},n_{1}}}(\ell(\lambda^o)+\ell(\lambda^e)),\\
\sum_{\lambda \in P_{n}}2\ell(\lambda^{d})
&=\sum_{\lambda \in P_{n}}2\ell({\lambda^e})
=\sum_{\lambda \in P_{n}}(\ell(\lambda^o)+\ell(\lambda^e)-\ell(\lambda^{r}))
=\sum_{\lambda \in P_{n}}(\ell(\tilde{\lambda^{r}})+\ell(\lambda^e)-\ell(\lambda^{r})), \\ and\\
\sum_{\lambda \in P_{n_{0},n_{1}}}2\ell(\lambda^{d})
&=\sum_{\lambda \in P_{n_{0},n_{1}}}(\ell(\lambda^o)+\ell(\lambda^e)-\ell(\lambda^{r})).
\end{align*}
\end{proposition}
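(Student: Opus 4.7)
The plan is to observe that most of the stated equalities hold termwise in $\lambda$, and that the residual ones reduce cleanly to the classical Glaisher bijection $SP_a \leftrightarrow OP_a$.

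First, from the definition of $\phi$ we have $m_i(\lambda) = m_i(\lambda^r) + 2 m_i(\lambda^d)$ for every $i \geq 1$, hence $\ell(\lambda) = \ell(\lambda^r) + 2\ell(\lambda^d)$ for every $\lambda$. Similarly, $\psi$ splits the parts of $\lambda$ into odd parts and halves of even parts, so $\ell(\lambda) = \ell(\lambda^o) + \ell(\lambda^e)$. Subtracting gives the further termwise identity $2\ell(\lambda^d) = \ell(\lambda^o) + \ell(\lambda^e) - \ell(\lambda^r)$. Summing these three identities over $P_n$ (respectively over $P_{n_0,n_1}$) immediately delivers every equality in the proposition except the last entry of the first chain, the first equality of the third chain, and the last entry of the third chain.

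What remains reduces to the two genuine summation identities
\begin{align*}
\sum_{\lambda \in P_n}\ell(\tilde{\lambda^r}) &= \sum_{\lambda \in P_n}\ell(\lambda^o), \\
\sum_{\lambda \in P_n}\ell(\lambda^d) &= \sum_{\lambda \in P_n}\ell(\lambda^e),
\end{align*}
which I would attack by factorizing each side via $\phi$ or $\psi$. Namely,
$$\sum_{\lambda \in P_n}\ell(\tilde{\lambda^r}) = \sum_{a+2b=n}|P_b|\sum_{\mu \in SP_a}\ell(\tilde{\mu}), \qquad \sum_{\lambda \in P_n}\ell(\lambda^o) = \sum_{a+2b=n}|P_b|\sum_{\mu \in OP_a}\ell(\mu),$$
and the inner sums coincide because the Glaisher map $\mu \mapsto \tilde{\mu}$ is a bijection $SP_a \to OP_a$. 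Analogously,
$$\sum_{\lambda \in P_n}\ell(\lambda^d) = \sum_{a+2b=n}|SP_a|\sum_{\nu \in P_b}\ell(\nu), \qquad \sum_{\lambda \in P_n}\ell(\lambda^e) = \sum_{a+2b=n}|OP_a|\sum_{\nu \in P_b}\ell(\nu),$$
and these agree since $|SP_a| = |OP_a|$, which is the classical Euler identity and is once again witnessed by Glaisher.

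There is no substantive obstacle. The only delicate point is to distinguish carefully the identities that are genuinely termwise from the two that require Glaisher's bijection, and to keep the $\phi$- and $\psi$-factorizations consistently indexed so that the reduction to $SP_a \leftrightarrow OP_a$ is transparent.
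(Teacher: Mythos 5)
Your proof is correct. The paper states this proposition without any proof, so there is nothing to compare against; your argument supplies exactly what is needed: the identities $\ell(\lambda)=\ell(\lambda^{r})+2\ell(\lambda^{d})=\ell(\lambda^{o})+\ell(\lambda^{e})$ do hold termwise (hence over $P_{n_{0},n_{1}}$ as well), and the two residual equalities correctly reduce, via the product structure of $\phi$ and $\psi$, to $\sum_{\mu\in SP_{a}}\ell(\tilde{\mu})=\sum_{\mu\in OP_{a}}\ell(\mu)$ and $|SP_{a}|=|OP_{a}|$, both instances of the Glaisher bijection.
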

Looking at the representation spaces $\SF^{HU}$ and $\SF^{HT}$,
we have the following two natural bases for the space 
$$\SF(0)_{n}={\mathbb C}[t_{j}; j \geq 1]_{n}$$
consisting of the homogenous polynomials of degree $n$ subject to deg $t_{j}=j$.
Namely we have
\begin{enumerate}
\item[(i)] $\{S_{\lambda}(t); \lambda \in P_{n}\}$,
\item[(ii)] $\{Q_{\lambda^{r}}(t)S_{\lambda^{d}}(t'); \lambda \in P_{n}\}$.
\end{enumerate}
For simplicity we write
$$W_{\lambda}(t)=Q_{\lambda^{r}}(t)S_{\lambda^{d}}(t')$$
for $\lambda \in P_{n}$ and call the set (ii) the compound basis for  $\SF(0)_{n}$.

Our problem is to determine the transition matrix between 
these two bases. 
Let $A_{n}=(a_{\lambda \mu})$ be defined by
\begin{equation}
S_{\lambda}(t)=\sum_{\mu \in P_n}a_{\lambda \mu}W_{\mu}(t)
\end{equation}
for $\lambda \in P_{n}$.
\\\ \ Here we remark the relation between our basis and the $Q^{'}$-functions. Lascoux, Leclerc and Thibon (cf. \cite{LLT}) introduced the $Q^{'}$-functions as the basis for $\SF(0)_{n}$ dual to $P$-functions with respect to the inner product
$$\langle F(t),G(t)\rangle_{0}:=F(\tilde{\partial})\overline{G(t)}|_{t=0},$$ where $\tilde{\partial}=(\frac{\partial}{\partial t_1}, \frac{1}{2}\frac{\partial}{\partial t_2}, \frac{1}{3}\frac{\partial}{\partial t_3}, \ldots)$. For a strict partition $\mu$ we see that $Q_{\mu}^{'}(t)=Q_{\mu}(2t)$. For a partition $\lambda$ which is not necessarily strict, we see that $$Q_{\lambda}^{'}(t)=Q_{\lambda^{r}}(2t)h_{\lambda^{d}}(t^{'})$$ where $h_{\lambda^{d}}$ is the complete symmetric function indexed by $\lambda^{d}$. Therefore the transition from $W_{\lambda}$ to $Q_{\mu}^{'}$ is essentially given by the Kostka numbers.

\section{transition matrices}
In the previous section, functions are expressed in terms of the ``time variables" $t=(t_1,t_2,\ldots)$ of the soliton equations. However, for the description and the proof of our formula, it is more convenient to use the ``original" variables of the symmetric functions, i.e., the eigenvalues $x=(x_1, x_2, \ldots).$ 
\\\ \ The definition $(1)$ of $a_{\lambda \mu}$ is rewritten as 
 $$S_{\lambda}(x,x)=\sum_{\mu \in P_{n}}a_{\lambda \mu}Q_{\mu^r}(x)S_{\mu^d}(x^2),$$where $(x,x)=(x_1,x_1,x_2,x_2,\ldots)$ and $x^2=(x_{1}^2,x_{2}^2,\ldots).$
 Hereafter we will denote 
$$ W_{\lambda}(x)=Q_{\lambda^{r}}(x)S_{\lambda^{d}}(x^2),\ V_{\lambda}(x)=P_{\lambda^{r}}(x)S_{\lambda^{d}}(x^2).
 $$
 Also we set the following spaces of symmetric functions
$$\Lambda={\mathbb C}[p_{r}(x); r \geq 1],\ 
\Gamma={\mathbb C}[p_{r}(x); r \geq 1,\rm{odd}],$$ 
and $$\Gamma^{'}={\mathbb C}[p_{r}(x); r \geq2, \rm{even}]$$
so that $$\Lambda \cong \Gamma \otimes \Gamma^{'}. $$ We have two bases for $\Lambda$:
$$W=(W_{\lambda}(x))_{\lambda}\ {\rm and} \ V=(V_{\lambda}(x))_{\lambda}.$$
First we notice the following Cauchy identity.
%
\begin{proposition}
$$\prod_{i,j \geq 1}\frac{1}{(1-x_{i}y_{j})^2}=\sum_{\lambda \in P}W_\lambda(x)V_{\lambda}(y)
. $$
\end{proposition}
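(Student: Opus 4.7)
The plan is to use the bijection $\phi:P_n \to \bigsqcup_{n_0+2n_1=n}SP_{n_0}\times P_{n_1}$, $\lambda\mapsto(\lambda^r,\lambda^d)$ defined in Section~3 to factor the right-hand sum into two independent pieces, each of which is the left side of a classical Cauchy identity.

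First I would rewrite the sum. By definition
$$\sum_{\lambda\in P}W_\lambda(x)V_\lambda(y)=\sum_{\lambda\in P}Q_{\lambda^r}(x)P_{\lambda^r}(y)\,S_{\lambda^d}(x^2)S_{\lambda^d}(y^2).$$
Since $\phi$ is a bijection onto $SP\times P$, I may replace the sum over $\lambda\in P$ by an independent double sum over $(\mu,\nu)\in SP\times P$ (with $\mu=\lambda^r$, $\nu=\lambda^d$), which then factors as
$$\left(\sum_{\mu\in SP}Q_\mu(x)P_\mu(y)\right)\left(\sum_{\nu\in P}S_\nu(x^2)S_\nu(y^2)\right).$$

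Next I would invoke two standard Cauchy identities: the $Q$--$P$ Cauchy identity (see Macdonald~III.8) gives
$$\sum_{\mu\in SP}Q_\mu(x)P_\mu(y)=\prod_{i,j\geq1}\frac{1+x_iy_j}{1-x_iy_j},$$
while the ordinary Schur Cauchy identity applied to the variables $x^2=(x_1^2,x_2^2,\dots)$ and $y^2$ gives
$$\sum_{\nu\in P}S_\nu(x^2)S_\nu(y^2)=\prod_{i,j\geq1}\frac{1}{1-x_i^2y_j^2}.$$

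Finally I would multiply the two products and simplify. Since $1-x_i^2y_j^2=(1-x_iy_j)(1+x_iy_j)$, the factor $1+x_iy_j$ in the numerator cancels with one in the denominator, leaving exactly
$$\prod_{i,j\geq1}\frac{1+x_iy_j}{(1-x_iy_j)(1-x_iy_j)(1+x_iy_j)}=\prod_{i,j\geq1}\frac{1}{(1-x_iy_j)^2},$$
as desired. No step here is really an obstacle; the only conceptual point is recognizing that the compound basis was built precisely so that $\phi$ decouples the ``odd'' ($Q$/$P$) and ``even'' ($S$ in squared variables) factors, at which point the identity follows from standard Cauchy identities and an algebraic cancellation.
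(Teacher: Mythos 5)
Your proposal is correct and follows essentially the same route as the paper's own proof: factor the sum via the bijection $\phi$ into a $Q$--$P$ Cauchy sum times a Schur Cauchy sum in the squared variables, then cancel $1+x_iy_j$ against the factorization of $1-x_i^2y_j^2$. The only cosmetic difference is that you spell out the final cancellation, which the paper leaves implicit.
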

\begin{proof}
We compute
\begin{align*}
\sum_{\lambda \in P}W_\lambda(x)V_{\lambda}(y)&=
\sum_{\lambda \in P}
Q_{\lambda^{r}}(x)S_{\lambda^{d}}(x^2)P_{\lambda^{r}}(y)S_{\lambda^{d}}(y^2)\\
&=
\sum_{\mu \in SP}Q_{\mu}(x)P_{\mu}(y)
\sum_{\nu \in P}S_{\nu}(x^2)S_{\nu}(y^2).\\
\end{align*}
Taking the inner products $\langle\  ,\ \rangle_{-1}$ and $\langle\  ,\ \rangle_{0}$ on $\Lambda$, we obtain $$\sum_{\mu \in SP}Q_{\mu}(x)P_{\mu}(y)=\prod_{i,j}\frac{1+x_{i}y_{j}}{1-x_{i}y_{j}}, $$ and $$\sum_{\nu \in P}S_{\nu}(x^2)S_{\nu}(y^2)=\prod_{i,j}\frac{1}{1-x_{i}^{2}y_{j}^{2}}.$$ We have $$\sum_{\lambda \in P}W_\lambda(x)V_{\lambda}(y)=
\prod_{i,j}\frac{1}{(1-x_{i}y_{j})^2}. $$
\end{proof}
%
By a standard argument, we have
\begin{corollary}
$$\langle W_\lambda(x),V_{\mu}(x)\rangle_{-1}=\delta_{\lambda \mu}.$$

\end{corollary}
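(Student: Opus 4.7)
The plan is to recognize Proposition 4.2 as the Cauchy-kernel expansion for the inner product $\langle\cdot,\cdot\rangle_{-1}$ on $\Lambda$ and then invoke the standard duality principle: two bases $\{u_\lambda\},\{v_\lambda\}$ of $\Lambda$ satisfy $\langle u_\lambda,v_\mu\rangle_{-1}=\delta_{\lambda\mu}$ if and only if $\sum_\lambda u_\lambda(x)v_\lambda(y)$ equals the reproducing kernel $\sum_\lambda w_\lambda(x)w_\lambda^{\ast}(y)$ for one (equivalently, every) dual pair $\{w_\lambda\},\{w_\lambda^{\ast}\}$ with respect to $\langle\cdot,\cdot\rangle_{-1}$.

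The first step is to identify this reproducing kernel explicitly. Since $\langle p_\lambda,p_\mu\rangle_{-1}=2^{-\ell(\lambda)}z_\lambda\delta_{\lambda\mu}$, the basis dual to $\{p_\lambda\}_{\lambda\in P}$ is $\{2^{\ell(\lambda)}z_\lambda^{-1}p_\lambda\}_{\lambda\in P}$, and the standard exponential computation yields
\begin{align*}
\sum_{\lambda\in P}\frac{2^{\ell(\lambda)}}{z_\lambda}p_\lambda(x)p_\lambda(y)
=\prod_{i,j\geq 1}\exp\!\left(\sum_{k\geq 1}\frac{2(x_iy_j)^k}{k}\right)
=\prod_{i,j\geq 1}\frac{1}{(1-x_iy_j)^2}.
\end{align*}
Proposition 4.2 says the same product equals $\sum_{\lambda\in P}W_\lambda(x)V_\lambda(y)$, so the two Cauchy expansions coincide.

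For the duality step, let $\{W_\lambda^{\ast}\}$ denote the basis of $\Lambda$ dual to $\{W_\lambda\}$ with respect to $\langle\cdot,\cdot\rangle_{-1}$ and expand $V_\mu=\sum_\nu\langle W_\nu,V_\mu\rangle_{-1}\,W_\nu^{\ast}$. Substituting into Proposition 4.2 and comparing with $\sum_\lambda W_\lambda(x)W_\lambda^{\ast}(y)$, which equals the Cauchy kernel computed above, one compares coefficients of the linearly independent family $\{W_\lambda(x)\}_\lambda$ in the $x$-variables and of $\{W_\nu^{\ast}(y)\}_\nu$ in the $y$-variables to force $\langle W_\nu,V_\mu\rangle_{-1}=\delta_{\mu\nu}$. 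I do not anticipate any real obstacle: the argument is a formal consequence of Proposition 4.2 together with a power-sum computation, and the only point requiring care is the extra factor $2^{\ell(\lambda)}$, which is exactly what distinguishes $\langle\cdot,\cdot\rangle_{-1}$ from the usual Hall inner product.
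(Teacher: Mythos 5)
Your argument is exactly the ``standard argument'' the paper invokes without detail after Proposition 4.2: you identify $\prod_{i,j}(1-x_iy_j)^{-2}$ as the reproducing kernel of $\langle\ ,\ \rangle_{-1}$ via $\sum_{\lambda}2^{\ell(\lambda)}z_\lambda^{-1}p_\lambda(x)p_\lambda(y)$ and then apply the usual dual-basis criterion, which is correct. This matches the paper's (implicit) proof, with the details properly filled in.
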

%
\begin{theorem}
The matrix $A_n$ is integral.
\end{theorem}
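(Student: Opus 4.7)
The plan is to reduce the integrality of $A_n$ to the classical fact that certain standard symmetric functions admit integral expansions in the Schur basis, using the Cauchy-type identity of Proposition 3.2 as a duality bridge. The first step is to combine Proposition 3.2 with the ordinary Cauchy identity applied to the doubled variable set $(x,x)=(x_1,x_1,x_2,x_2,\ldots)$, noting that $\prod_{i,j}(1-(x,x)_iy_j)^{-1} = \prod_{i,j}(1-x_iy_j)^{-2}$. This yields
\[
\sum_{\lambda \in P} S_\lambda(x,x)\,S_\lambda(y) \;=\; \sum_{\mu \in P} W_\mu(x)\,V_\mu(y).
\]
Substituting the defining expansion $S_\lambda(x,x) = \sum_\mu a_{\lambda\mu} W_\mu(x)$ into the left-hand side and using that $\{W_\mu(x)\}$ is linearly independent in the $x$-variables, I would obtain the dual identity $V_\mu(y) = \sum_\lambda a_{\lambda\mu} S_\lambda(y)$. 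Thus $a_{\lambda\mu}$ is precisely the coefficient of $S_\lambda(y)$ in the ordinary Schur expansion of $V_\mu(y) = P_{\mu^r}(y)\,S_{\mu^d}(y^2)$.

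This reduces the theorem to showing that $V_\mu$ lies in the $\mathbb{Z}$-span of the Schur functions. Since the Littlewood-Richardson coefficients are integers, this $\mathbb{Z}$-lattice is a subring of $\Lambda$, so it suffices to treat each factor of $V_\mu$ separately. The factor $S_{\mu^d}(y^2) = S_{\mu^d}[p_2(y)]$ is a plethystic substitution with $p_2 = S_{(2)} - S_{(1,1)}$, and hence expands into Schur functions with integer (possibly negative) coefficients via iterated Littlewood-Richardson. The factor $P_\nu$ for strict $\nu$ expands in Schur functions with non-negative integer coefficients, by Stembridge's shifted Littlewood-Richardson rule. Combining these two integrality statements gives $a_{\lambda\mu}\in\mathbb{Z}$.

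The main obstacle is the integrality of $P_{\mu^r}$ in the Schur basis. The corresponding statement for $Q_{\mu^r}$ would be essentially immediate from the Pfaffian formula $Q_\nu = \mathrm{Pf}(Q_{(\nu_i,\nu_j)})$, whereas $P_\nu = 2^{-\ell(\nu)}Q_\nu$ carries a denominator of $2^{\ell(\nu)}$ which must be cancelled by an arithmetic property of the Schur coefficients of $Q_\nu$. If a self-contained proof were desired, one could replace the appeal to Stembridge by induction on $|\nu|$ using the Pfaffian together with the explicit integral Schur expansion of the two-row case $P_{(a,b)}$, but the cleanest route is simply to cite the shifted Littlewood-Richardson theorem.
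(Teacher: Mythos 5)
Your proposal follows essentially the same route as the paper: both extract from the Cauchy identity that $a_{\lambda\mu}$ is the coefficient of $S_{\lambda}(y)$ in $V_{\mu}(y)=P_{\mu^r}(y)S_{\mu^d}(y^2)$, and both conclude integrality from Stembridge's non-negative integer coefficients for $P_{\mu^r}$ together with the integrality of the Schur expansion of $S_{\mu^d}(y^2)$, combined via Littlewood--Richardson. The only caveat is your justification of the latter step: plethysm is not linear in the inner argument, so writing $p_2=S_{(2)}-S_{(1,1)}$ and invoking ``iterated Littlewood--Richardson'' is not literally a proof; the clean statement you need is exactly the signed $2$-quotient formula $S_{\mu^d}(y^2)=\sum_{\xi}\delta(\xi)c^{\mu^d}_{\xi[0],\xi[1]}S_{\xi}(y)$ that the paper cites from Chen--Garsia--Remmel (or, more cheaply, the observation that $S_{\mu^d}(y^2)$ is a symmetric function with integer monomial coefficients, hence lies in the $\mathbb{Z}$-span of Schur functions).
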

\begin{proof}
We have
$$\sum_{\lambda \in P}
W_{\lambda}(x)V_{\lambda}(y)=
\prod_{i,j}\frac{1}{(1-x_{i}y_{j})^2}=\sum_{\lambda \in P}S_{\lambda}(x,x)S_{\lambda}(y).$$
Taking the inner product $\langle\ ,\ \rangle_{0}$ with $S_{\mu}(y)$, we obtain \begin{align*}
S_{\lambda}(x,x)&=\sum_{\mu \in P}\langle W_{\mu}(x)V_{\mu}(y), S_{\lambda}(y) \rangle_{0}\\
&=\sum_{\mu \in P}\langle V_{\mu}(y), S_{\lambda}(y) \rangle_{0} W_{\mu}(x).
\end{align*}
Thus we know
$$a_{\lambda \mu}=\langle V_{\mu}(y)
,S_{\lambda}(y)\rangle_{0}.$$
The numbers $ g_{\mu^r \nu}$ defined by $$P_{\mu^r}(y)=\sum_{\nu \in P} g_{\mu^r \nu}S_{\nu}(y)$$ are called the Stembridge coefficients and are known to be non-negative integers. Also one finds the following formula in \cite{cgr}. 
$$S_{\mu^{p}}(y^2)=\sum_{\xi \in P} \delta(\xi)c_{\xi[0], \xi[1]}^{\mu^d}S_{\xi}(y),$$
where $\delta(\xi)$ is the 2-$sign$ of $\xi$, $(\xi[0], \xi[1])$ is 
the 2-$quotient$ of $\xi$ (cf. \cite{ol}) and $c_{\xi[0], \xi[1]}^{\mu^d}$ is the Littlewood-Richardson coefficient. Hence
\begin{align*}
V_{\mu}(y)&=
P_{\mu^r}(y)S_{\mu^{p}}(y^2)\\
&=\sum_{\nu,\xi}\delta(\xi)
g_{\mu^r \nu}
c_{\xi[0], \xi[1]}^{\mu^d}
S_{\nu}(y)S_{\xi}(y)\\
&=\sum_{\lambda}\left(\sum_{\nu,\xi}\delta(\xi)g_{\mu^r \nu}
c_{\nu \xi}^{\lambda}
c_{\xi[0], \xi[1]}^{\mu^d} \right)
S_{\lambda}(y). 
\end{align*}
Therefore $$a_{\lambda \mu}=\sum_{\nu, \xi}\delta(\xi)g_{\mu^r \nu}
c_{\nu \xi}^{\lambda} 
c_{\xi[0], \xi[1]}^{\mu^d}$$ is an integer.
\end{proof}
%
\begin{example}

\[
\ \ \ \ \ A_{3}=
\bordermatrix{
  & (3,\emptyset) & (21,\emptyset) & (1,1) \cr
   (3) & 1 & 0 & 1\cr
  (21) & 1 & 1 & 0\cr
  (1^3)& 1 & 0 &-1}
  \]
  \[
  A_4=
\bordermatrix{
   & (4,\emptyset)& (31,\emptyset) & (\emptyset, 2) & (\emptyset,1^2)& (2,1)\cr
  (4) & 1 & 0 & 1 & 0 & 1\cr
  (31)& 1 & 1 & -1 & 0 & 1\cr
  (2^2) & 0 & 1 & 1 & 1 & 0\cr
  (1^4)& 1 & 0 & 0 & 1 & -1\cr
  (21^2) & 1 & 1 & 0 & -1 & -1}
\]

%
\end{example}

\ \\\\
\ \ \ As for the columns corresponding to $(\mu,\emptyset)$ with $\mu \in SP_n$, entries are non-negative integers. The submatrix consisting of these columns will be denoted by $\Gamma_n$. The entries of $\Gamma_n $ are the Stembridge coefficients, whose combinatorial nature has been known (\cite{st}, \cite{mac}).
\\\ \ \ Here we recall the definition of decomposition matrices for the $p$-modular representations of the symmetric group $S_n$.  Let $p$ be a fixed prime number.  A partition 
$\lambda=(\lambda_1,\lambda_2,\cdots,\lambda_{\ell})$ is said to be $p$-regular of there are no parts satisfying $\lambda_i=\lambda_{i+1}=\cdots=\lambda_{i+p-1} \geq 1.$ 
Note that a 2-regular partition is nothing but a strict partition. The set of $p$-regular partitions of $n$ is denoted by $P^{r(p)}_{n}.$
A partition $\rho=(1^{m_1}2^{m_2}\cdots)$ is said to be $p$-class regular if $m_{p}=m_{2p}=\cdots=0.$ Note that a 2-class regular partition is nothing but an odd partition. 
The set of $p$-class regular partitions of $n$ is denoted by $P^{c(p)}_{n}$. 
The $p$-Glaisher map $\lambda \mapsto \tilde{\lambda}$ is defined in a natural way.  
This gives a bijection between $P^{r(p)}_{n}$ and $P^{c(p)}_{n}$. 
For $\lambda \in P^{r(p)}_{n}$, we define the Brauer-Schur function $B_{\lambda}^{(p)}(t)$ indexed by 
$\lambda$ as follows.
$$B_{\lambda}^{(p)}(t)=\sum_{\rho \in P^{c(p)}_{n}}\varphi_{\rho}^{\lambda}\frac{t_1^{m_1}t_2^{m_2}\cdots}{m_1!m_2!\cdots} \quad \in \SF(0)_{n},$$
where $\varphi_{\rho}^{\lambda}$ is the irreducible Brauer character corresponding to $\lambda$, evaluated at the $p$-regular conjugacy class $\rho$. 
These functions form a basis for the space
$\SF^{(p)}_{n}=\SF^{(p)} \cap \SF(0)_{n}$, where 
$$\SF^{(p)}= {\mathbb C}[t_j; j \geq 1, j \not \equiv 0 \, ({\rm{mod}}\, p) \}.$$

Given a Schur function $S_{\lambda}(t)$, define the $p$-reduced Schur function $S_{\lambda}^{(p)}(t)$ by "killing" all variables $t_p, t_{2p}, \cdots $;
$$S_{\lambda}^{(p)}(t)=S_{\lambda}(t)|_{t_{jp}=0}.$$ 
These $p$-reduced Schur functions are no longer linearly independent. All linear relations among these polynomials are known (cf. \cite{any}).
The $p$-decomposition matrix $D_{n}^{(p)}=(d_{\lambda \mu})$ is defined by 
$$S_{\lambda}^{(p)}(t)=\sum_{\mu \in P^{r(p)}_{n}}d_{\lambda \mu}B_{\mu}^{(p)}(t)$$ for $\lambda \in P_{n}$, and are known to satisfy the properties; $d_{\lambda \mu} \in {\mathbb Z}_{\geq 0},$ $d_{\lambda \mu}=0$ unless $\mu \geq \lambda$ and $d_{\lambda \lambda}=1.$ Here $``\geq "$ denotes the dominance order. 
\\\ \ \ Now let us go back to the case of $p=2$. By definition, the Stembridge coefficients $\gamma_{\lambda \mu} \quad (\lambda \in P_{n},\mu \in SP_{n})$ appear as 
$$S_{\lambda}^{(2)}(t)=\sum_{\mu \in SP_{n}}\gamma_{\lambda \mu}Q_{\mu}(t).$$
Looking at the matrices $D_n^{(2)}=(d_{\lambda \mu})$ and $\Gamma_n=(\gamma_{\lambda \mu})$, one observes that they are 
``very similar". We consider the Cartan matrix $C_n^{(2)}={}^tD_n^{(2)}D_n^{(2)}$ and the correspondent 
$G_n={}^t{\Gamma_n}{\Gamma_n}.$ There is a compact formula for the elementary divisors of $C_n^{(2)}$(\cite{unya}):\ $2^{\ell(\tilde{\lambda})-\ell(\lambda)}$ for $\lambda \in SP_{n}.$

\begin{theorem}

The elementary divisors of $C_n^{(2)}$ and $G_n$ coincide.

\end{theorem}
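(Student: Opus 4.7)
The plan is to realize $G_n$ as a unimodular congruence of $C_n^{(2)}$, since such a transformation preserves the Smith normal form, and the elementary divisors of $C_n^{(2)}$ have already been recorded in the paper.

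The key observation is that $\{B_\mu^{(2)}\}_{\mu\in SP_n}$ and $\{Q_\mu\}_{\mu\in SP_n}$ are bases of the same space $\SF^{(2)}_n$, so there is a unique transition matrix $U=(u_{\mu\nu})_{\mu,\nu\in SP_n}$ determined by $B_\mu^{(2)}=\sum_{\nu\in SP_n}u_{\mu\nu}Q_\nu$. Substituting this into the defining relation $S_\lambda^{(2)}=\sum_\mu d_{\lambda\mu}B_\mu^{(2)}$ and comparing with $S_\lambda^{(2)}=\sum_\nu\gamma_{\lambda\nu}Q_\nu$ yields the matrix identity $\Gamma_n=D_n^{(2)}U$, whence
\[
G_n={}^t\Gamma_n\,\Gamma_n={}^tU\,C_n^{(2)}\,U.
\]
Once $U$ is shown to lie in $GL_{|SP_n|}(\mathbb{Z})$, the Smith normal forms (and thus the elementary divisors) of $G_n$ and $C_n^{(2)}$ coincide, and the theorem follows.

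To verify unimodularity of $U$, I would restrict the identity $\Gamma_n=D_n^{(2)}U$ to the rows indexed by $SP_n\subset P_n$, giving $U=\bigl(D_n^{(2)}|_{SP_n}\bigr)^{-1}\Gamma_n|_{SP_n}$. The paper already records $d_{\mu\mu}=1$ and $d_{\lambda\mu}=0$ unless $\mu\geq\lambda$ in dominance, so the square submatrix $D_n^{(2)}|_{SP_n}$ is triangular with $1$'s on the diagonal and thus belongs to $GL_{|SP_n|}(\mathbb{Z})$. The analogous triangular properties of the Stembridge coefficients, namely $\gamma_{\mu\mu}=1$ and $\gamma_{\lambda\mu}=0$ for $\lambda,\mu\in SP_n$ with $\mu\not\geq\lambda$, can be extracted from the shifted-tableau description in \cite{st} (and are consistent with the $A_3$ and $A_4$ tables exhibited above), so $\Gamma_n|_{SP_n}$ is also triangular with unit diagonal. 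Hence $U$ is a product of two unimodular integer matrices and is itself unimodular.

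The main obstacle is the verification of the unitriangularity of $\Gamma_n|_{SP_n}$, since this is the only structural input beyond what has already been recalled in the paper. Once this is in hand, the conclusion is immediate from the classical fact that unimodular congruence preserves the Smith normal form, combined with the quoted formula $2^{\ell(\tilde\lambda)-\ell(\lambda)}$ for the elementary divisors of $C_n^{(2)}$.
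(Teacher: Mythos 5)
Your proposal is correct and follows essentially the same route as the paper: the paper also introduces the transition matrix $T_n$ from $\{B^{(2)}_\mu\}$ to $\{Q_\mu\}$, derives $\Gamma_n=D_n^{(2)}T_n$, and proves $T_n$ is integral and unitriangular (by a row-by-row induction over the strict partitions, which is just an unrolled version of your inversion of the unitriangular submatrix $D_n^{(2)}|_{SP_n}$), concluding via unimodular congruence. The only cosmetic difference is that you do not need the explicit formula $2^{\ell(\tilde\lambda)-\ell(\lambda)}$ for the elementary divisors of $C_n^{(2)}$, since the theorem asserts only that the two sets of elementary divisors coincide.
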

\begin{proof}\
We put $\tilde{Z}_n=(2^{\frac{\ell(\lambda)-\ell(\rho)+\epsilon}{2}}\zeta^{\lambda}_{\rho})_{\lambda \in SP_n, \rho \in OP_n}$, $\Phi_{n}^{(2)}=(\varphi^{\lambda}_{\rho})_{\lambda \in SP_n, \rho \in OP_n}$ and $X_{n}^{(2)}=(\chi^{\lambda}_{\rho})_{\lambda \in P_n,\rho \in OP_n}$. The transition matrix $T_n=(t_{\lambda \mu})_{\lambda,\mu \in SP_n}$ is defined by $$B_{\lambda}^{(2)}(t)=\sum_{\mu \in SP(n)}t_{\lambda \mu}Q_{\mu}(t).$$\ \ \ By definition of $\Gamma_n$ and $D^{(2)}_{n},$
\ we have $X_{n}^{(2)}=\Gamma_n \tilde{Z}_n=D_n^{(2)}\Phi^{(2)}_n$ and $\Phi^{(2)}_{n}=T_n\tilde{Z}_n$. Hence, we have
$\Gamma_n=X^{(2)}_{n}\tilde{Z}^{-1}_{n}=D_n^{(2)}\Phi^{(2)}_n\tilde{Z}^{-1}_n=D_n^{(2)}T_n.$ 
\\\ \ \ The matrix $\Gamma_n$ has the following properties; $\gamma_{\lambda \mu} \in {\mathbb Z}_{\geq 0}$, $\gamma_{\lambda \mu}=0$ unless $\mu \ge \lambda$ and $\gamma_{\lambda \lambda}=1$ (\cite{st}). Fix a total order in the set of partitions which is compatible with the dominance order, and
we shall write $(d_{ij}),(\gamma_{ij})$ and $(t_{ij})$ in place of $(d_{\lambda \mu}),(\gamma_{\lambda \mu})$ and $(t_{\lambda \mu})$, respectively. Looking at the first row of $D_n^{(2)}T_n$, we have $$\delta_{1j}=\gamma_{1j}=\sum_{k=1}^{}d_{1k}t_{kj}.$$ This shows that $t_{1j}=\delta_{1j}.$ As for the second row of $D_{n}^{(2)}T_n$, we have $$\delta_{2j}=\gamma_{2j}=\sum_{k=1}^{}d_{2k}t_{kj}=\sum_{k=2}^{}d_{2k}t_{kj}\ \ (j \geq 2).$$ This shows that $t_{2j}=\delta_{2j}.$ Inductively, we can see that $T_n$ is a lower unitriangular integral matrix. Therfore the matrix $D_{n}^{(2)} $ and $\Gamma_n$ are transformed to each other by column operations. By a standerd argument we see that the elementary divisors of $C_n^{(2)}$ and $G_n$ coincide.


\end{proof}

Our transition matrix $A_n=(a_{\lambda \mu})_{\lambda,\mu \in P_{n}}$ can be regarded as a common extension of the matrix $\Gamma_n$ of  Stembridge coefficients and 
the decomposition matrix $D_n^{(2)}$. 

%

\begin{theorem}
$$|\det A_{n}|=2^{k_{n}},$$
where $k_{n}=\sum_{\lambda \in P_{n}}\ell(\lambda^e)
=\sum_{\lambda \in P_{n}}(\ell(\tilde{\lambda^{r}})-\ell(\lambda^{r}))$.
\end{theorem}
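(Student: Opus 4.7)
The plan is to compute $|\det A_n|$ via the Gram matrix of the $V$-basis with respect to $\langle\cdot,\cdot\rangle_0$, exploiting the tensor decomposition $\Lambda=\Gamma\otimes\Gamma'$. A direct calculation gives $\langle p_\rho,p_\sigma\rangle_0=z_\rho\delta_{\rho\sigma}$, so $\langle\cdot,\cdot\rangle_0$ is the standard Hall inner product, under which $\{S_\lambda\}$ is orthonormal. From the proof of Theorem~4.3 one has $a_{\lambda\mu}=\langle V_\mu,S_\lambda\rangle_0$, so $A_n^t$ is the matrix expressing the basis $\{V_\mu\}$ in the Schur basis. Consequently
$$(\det A_n)^2=\det G,\qquad G:=(\langle V_\mu,V_\nu\rangle_0)_{\mu,\nu\in P_n}.$$

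Next, decompose $G$ via the tensor structure. The Hall inner product factors on $\Gamma\otimes\Gamma'$ because $z_\rho=z_{\rho^o}z_{\rho^e}$. Since $V_\mu=P_{\mu^r}(x)S_{\mu^d}(x^2)$ with $P_{\mu^r}\in\Gamma_{|\mu^r|}$ and $S_{\mu^d}(x^2)\in\Gamma'_{2|\mu^d|}$, and since elements of different degrees are orthogonal, $G$ is block-diagonal, indexed by pairs $(n_0,n_1)$ with $n_0+2n_1=n$. Each block, with rows and columns indexed by $(\alpha,\beta)\in SP_{n_0}\times P_{n_1}$ via $\mu\leftrightarrow(\mu^r,\mu^d)$, is the Kronecker product $G^\Gamma_{n_0}\otimes G^{\Gamma'}_{n_1}$, where $G^\Gamma_{n_0}=(\langle P_\alpha,P_{\alpha'}\rangle_0)$ and $G^{\Gamma'}_{n_1}=(\langle S_\beta(x^2),S_{\beta'}(x^2)\rangle_0)$.

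The two block determinants can be evaluated explicitly. For $G^{\Gamma'}_{n_1}$, the ring map $\sigma_2\colon p_j\mapsto p_{2j}$ satisfies $\langle\sigma_2 f,\sigma_2 g\rangle_0=\langle Tf,g\rangle_0$ on $\Lambda_{n_1}$, where $T(p_\rho)=2^{\ell(\rho)}p_\rho$. Since the Schur functions are $\langle\cdot,\cdot\rangle_0$-orthonormal, $G^{\Gamma'}_{n_1}$ equals the matrix of $T$ in the Schur basis, so $\det G^{\Gamma'}_{n_1}=\prod_{\rho\in P_{n_1}}2^{\ell(\rho)}$. For $G^\Gamma_{n_0}$, expand $Q_\alpha$ in power sums through the spin character matrix $\tilde Z_{n_0}=\bigl(2^{(\ell(\alpha)-\ell(\rho)+\epsilon)/2}\zeta^\alpha_\rho\bigr)$; the biorthogonality $\langle Q_\alpha,P_\beta\rangle_{-1}=\delta_{\alpha\beta}$ then reads $\tilde Z_{n_0}\,\mathrm{diag}(2^{\ell(\rho)}/z_\rho)\,\tilde Z_{n_0}^t=\mathrm{diag}(2^{\ell(\alpha)})$, which pins down $(\det\tilde Z_{n_0})^2$. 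Substituting into the expansion of $\langle P_\alpha,P_{\alpha'}\rangle_0$ and simplifying yields
$$\det G^\Gamma_{n_0}=2^{-\sum_{\alpha\in SP_{n_0}}\ell(\alpha)+\sum_{\rho\in OP_{n_0}}\ell(\rho)}=2^{\sum_{\alpha\in SP_{n_0}}(\ell(\tilde\alpha)-\ell(\alpha))},$$
the last equality from the Glaisher bijection $SP_{n_0}\leftrightarrow OP_{n_0}$.

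Finally, using $\det(M\otimes N)=(\det M)^{\dim N}(\det N)^{\dim M}$ and translating sums over $\bigsqcup_{n_0+2n_1=n}SP_{n_0}\times P_{n_1}$ to sums over $P_n$ via the bijection $\phi\colon\lambda\mapsto(\lambda^r,\lambda^d)$,
$$\log_2\det G=\sum_{\lambda\in P_n}(\ell(\tilde{\lambda^r})-\ell(\lambda^r))+\sum_{\lambda\in P_n}\ell(\lambda^d).$$
By Proposition~3.4, each of these two sums equals $k_n$, so $\log_2\det G=2k_n$ and $|\det A_n|=2^{k_n}$. The main obstacle is the computation of $\det G^\Gamma_{n_0}$, which requires careful bookkeeping of the powers of $2$ coming from the $Q/P$ normalization and the difference between $\langle\cdot,\cdot\rangle_0$ and $\langle\cdot,\cdot\rangle_{-1}$.
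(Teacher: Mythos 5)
Your argument is correct, but it takes a genuinely different route from the paper's proof. The paper exploits that $(W,V)$ and $(\tilde S,S)$ are \emph{both} dual pairs for $\langle\ ,\ \rangle_{-1}$, so that ${}^tM(S,V)_nM(\tilde S,W)_n=I$ and $(\det A_n)^2$ reduces to the determinants of $M(S,\tilde S)_n$ (evaluated via the character table and the second orthogonality relation, giving $\det L_n^{-1}$) and of the diagonal matrix $M(W,V)_n$; no Gram matrix is ever formed. You instead use the orthonormality of the Schur basis for $\langle\ ,\ \rangle_0$ to write $(\det A_n)^2=\det\bigl(\langle V_\mu,V_\nu\rangle_0\bigr)$, block-diagonalize this Gram matrix via $\Lambda\cong\Gamma\otimes\Gamma'$, and evaluate each block as a Kronecker product $G^{\Gamma}_{n_0}\otimes G^{\Gamma'}_{n_1}$; your values $\det G^{\Gamma'}_{n_1}=\prod_{\rho\in P_{n_1}}2^{\ell(\rho)}$ and $\det G^{\Gamma}_{n_0}=2^{\sum_{\alpha\in SP_{n_0}}(\ell(\tilde\alpha)-\ell(\alpha))}$ check out (the latter is cleanest via $\langle f,g\rangle_0=\langle T'f,g\rangle_{-1}$ on $\Gamma$ with $T'p_\rho=2^{\ell(\rho)}p_\rho$ and $\langle P_\alpha,P_\beta\rangle_{-1}=2^{-\ell(\alpha)}\delta_{\alpha\beta}$, which sidesteps the $\tilde Z_{n_0}$ bookkeeping you flag). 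The trade-off: the paper's argument is shorter, while yours proves more --- your block computation is exactly the content of the paper's later Theorem 4.8 on $|\det B_{n_0,n_1}|$ and of the block-diagonal structure in Proposition 4.9, so Theorem 4.6 drops out by summing over $(n_0,n_1)$ and invoking the length identities of Proposition 3.1 (which is what you mean by ``Proposition 3.4''; the reference to the proof of Theorem 4.3 for $a_{\lambda\mu}=\langle V_\mu,S_\lambda\rangle_0$ is the right one).
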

%
\begin{proof}
We have four bases of $\SF(0)_{n}$;
 $S=(S_{\lambda}(x))_{\lambda \in P_{n}}$, $\tilde{S}=(S_{\lambda}(x,x))_{\lambda \in P_{n}}$, $V=(P_{\lambda^{r}}(x)S_{\lambda^{d}}(x^2))_{\lambda\in P_{n}}$ and $W=(Q_{\lambda^{r}}(x)S_{\lambda^{d}}(x^2))_{\lambda\in P_{n}}$. 
From Corollary 4.2, $W$ and $V$ are dual to each other with respect to the inner product $\langle\ ,\ \rangle_{-1}$. Likewise, $\tilde{S}$ and $S$ are dual to each other.
Hence we obtain
$${}^tM(S,V)_{n}M(\tilde{S},W)_{n}=I,$$ where $M(S,V)_{n}$ denotes the transition matrix from the basis $S$ to the basis $V$ for $\SF(0)_{n}.$ Since
$$M(S,V)_{n}=M(S,\tilde{S})_{n}A_{n}M(W,V)_{n},$$
we see that $$(\det A_n)^2=\frac{1}{\det M(S,\tilde{S})_{n}\det M(W,V)_{n}}.$$
Let $X_n=(\chi^{\lambda}_{\rho})_{\lambda \rho}$
be the character table of
${\mathfrak S}_{n}$. 
We put $R_{n}={\rm diag}(z_{\rho};\rho \in P_n)$ and $L_{n}={\rm diag}(2^{\ell(\rho)};\rho \in P_n).$
Then we see that
\begin{align*}
\det M(S,\tilde{S})_n&=\det M(S,p)_{n}\det M(p,\tilde{S})_{n}\\
&=\det X_nR_{n}^{-1}
\det L_{n}^{-1}{}^tX_n\\
&=\det L_{n}^{-1},
\end{align*}
and
\begin{align*}
\det M(W,V)_{n}&=\prod_{\lambda \in P_{n}}2^{\ell(\lambda^{r})}.
\end{align*}
Hence we have $$\det A_{n}^2=\prod_{\lambda \in P_{n}}2^{\ell(\lambda)-\ell(\lambda^{r})}
=\prod_{\lambda \in P_{n}}2^{2\ell(\lambda^{d})}=\prod_{\lambda \in P_{n}}2^{2\ell(\lambda^e)}.$$
\end{proof}
%
Here is a small list of $k_n.$
$$
\begin{array}{c|ccccccccc}
n&1&2&3&4&5&6&7&8&\cdots\\
\hline
k_{n}&0&1&1&4&5&11&15&28&\cdots
\end{array}.$$

%
 


Next we consider the ``Cartan-like" matrix $^{t}A_{n}A_{n}.$
The Frobenius formula for $W_{\lambda}$ reads 
$$p_{\sigma}p_{2\rho}=\sum_{\lambda \in P_{n_{0},n_{1}}}2^{-\ell(\lambda^{r})}
 X_{\sigma}^{\lambda^{r}}\chi_{\rho}^{\lambda^{d}}W_{\lambda}(x) $$ for $\sigma \in OP_{n_{0}}$ and
  $\rho \in P_{n_{1}}$, where the Green function $X^{\lambda}_{\sigma}$ is defined by
\begin{align*}
 Q_{\lambda}(x)&=\sum_{\sigma}2^{\ell(\sigma)}z_{\sigma}^{-1}X_{\sigma}^{\lambda}p_{\sigma}
\end{align*} for $\lambda \in SP_n.$
This formula shows that the transition matrix $M(p,W)_{n}$ is, after a suitable sorting of rows and columns, decomposed into diagonal blocks, each block indexed by the pair $(n_{0},n_{1})$ with $n_{0}+2n_{1}=n$.

We have
 \begin{align*}
 {}^tA_{n}A_{n}&={}^tM(p,W)_{n}{}^tM(\tilde{S},p)_{n}M(\tilde{S},p)_{n}M(p,W)_{n}\\
&={}^tM(p,W)_{n}({}^tL_nX_{n}^{-1})(X_nR_{n}^{-1}L_n)M(p,W)_{n}\\
&={}^tM(p,W)_{n} {}^tL_{n}R_{n}^{-1}L_n M(p,W)_{n}.
 \end{align*}
Since $L_{n}^{2}R_{n}^{-1}$ is diagonal matrix, ${}^tA_{n}A_{n}$ is block diagonal matrix, each block indexed by the pair $(n_{0},n_{1}).$ Let denote $B_{n_{0},n_{1}}$ the corresponding block in ${}^tA_nA_n.$ Note that the ``principal" block $B_{n,0}$ is nothing but the matrix $G_n$.
\begin{example}
\[
\ \ \ \ \ {}^{t}A_{3}A_3=
\bordermatrix{
  & (3,\emptyset) & (21,\emptyset) & (1,1) \cr
   (3,\emptyset) & 3 & 1 & 0\cr
  (21,\emptyset) & 1 & 1 & 0\cr
  (1,1)& 0 & 0 &2},
  \]
  \[
  {}^{t}A_{4}A_4=
\bordermatrix{
   & (4,\emptyset)& (31,\emptyset) & (\emptyset, 2) & (\emptyset,1^2)& (2,1)\cr
  (4,\emptyset) & 4 & 2 & 0 & 0 & 0\cr
  (31,\emptyset)& 2 & 3 & 0 & 0 & 0\cr
  (\emptyset,2) & 0 & 0 & 3 & 1 & 0\cr
  (\emptyset,1^2)& 0 & 0 & 1 & 3 & 0\cr
  (2,1) & 0 & 0 & 0 & 0 & 4}.
\] 
\end{example}

%
\begin{theorem}
 \begin{align*}
|\det B_{n_{0},n_{1}}|
&=2^{ \sum_{\lambda \in P_{n_{0},n_{1}}}(\ell(\tilde{\lambda^{r}})+\ell(\lambda^{d})-\ell(\lambda^{r}))}. 
\end{align*}
\end{theorem}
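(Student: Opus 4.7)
The starting point is to read off, from the derivation ${}^tA_nA_n={}^tM(p,W)_n L_n^2R_n^{-1}M(p,W)_n$ in the text, that the block $B_{n_0,n_1}$ equals ${}^tM_{n_0,n_1}\tilde D_{n_0,n_1}M_{n_0,n_1}$, where $M_{n_0,n_1}$ is the $(n_0,n_1)$-block of $M(p,W)_n$ and $\tilde D_{n_0,n_1}$ is the diagonal block of $L_n^2R_n^{-1}$ indexed by the partitions $\pi=\sigma\cup 2\rho$ with $(\sigma,\rho)\in OP_{n_0}\times P_{n_1}$. Hence $|\det B_{n_0,n_1}|=(\det M_{n_0,n_1})^2\,|\det\tilde D_{n_0,n_1}|$, and the task reduces to computing these two determinants in closed form. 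The diagonal factor is immediate: since $\ell(\sigma\cup 2\rho)=\ell(\sigma)+\ell(\rho)$ and $z_{\sigma\cup 2\rho}=z_\sigma\cdot 2^{\ell(\rho)}z_\rho$, each diagonal entry equals $2^{2\ell(\sigma)+\ell(\rho)}/(z_\sigma z_\rho)$.

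The heart of the proof is to factor $M_{n_0,n_1}$. The Frobenius-type formula $p_\sigma p_{2\rho}=\sum_\lambda 2^{-\ell(\lambda^r)}X_\sigma^{\lambda^r}\chi_\rho^{\lambda^d}W_\lambda$ displayed in the text shows that the $((\sigma,\rho),(\mu,\nu))$-entry of $M_{n_0,n_1}$ is $2^{-\ell(\mu)}X_\sigma^\mu\chi_\rho^\nu$. Ordering rows and columns lexicographically (outer $\sigma,\mu$; inner $\rho,\nu$) this is a Kronecker product
$$M_{n_0,n_1}=(Y_{n_0}D_{n_0})\otimes X_{n_1},$$
where $Y_{n_0}=(X_\sigma^\mu)_{\sigma\in OP_{n_0},\,\mu\in SP_{n_0}}$ is the Green-function matrix (square since $|OP_{n_0}|=|SP_{n_0}|$), $D_{n_0}=\mathrm{diag}(2^{-\ell(\mu)})_{\mu\in SP_{n_0}}$, and $X_{n_1}$ is the character table of $\mathfrak{S}_{n_1}$. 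The Kronecker determinant identity then gives $\det M_{n_0,n_1}=(\det Y_{n_0}\det D_{n_0})^{|P_{n_1}|}(\det X_{n_1})^{|SP_{n_0}|}$. Row-orthogonality of characters on $\mathfrak{S}_{n_1}$ yields $(\det X_{n_1})^2=\prod_{\rho\in P_{n_1}}z_\rho$; its spin analogue $\langle Q_\mu,Q_\nu\rangle_{-1}=2^{\ell(\mu)}\delta_{\mu\nu}$, which follows from $P_\mu=2^{-\ell(\mu)}Q_\mu$ together with the $P/Q$-duality, translates into the matrix identity ${}^tY_{n_0}\,\mathrm{diag}(2^{\ell(\sigma)}/z_\sigma)\,Y_{n_0}=\mathrm{diag}(2^{\ell(\mu)})$, whence $(\det Y_{n_0})^2=\prod_{\mu\in SP_{n_0}}2^{\ell(\mu)}\prod_{\sigma\in OP_{n_0}}z_\sigma/2^{\ell(\sigma)}$.

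With these ingredients in hand, the remaining work is bookkeeping. In the product $(\det M_{n_0,n_1})^2\det\tilde D_{n_0,n_1}$, I expect all $z_\sigma$-factors to cancel against the $1/z_\sigma$ appearing in $\tilde D_{n_0,n_1}$, and all $z_\rho$-factors to cancel by virtue of the Euler identity $|SP_{n_0}|=|OP_{n_0}|$. The surviving power of $2$ has exponent $|P_{n_1}|\sum_{\mu\in SP_{n_0}}(\ell(\tilde\mu)-\ell(\mu))+|SP_{n_0}|\sum_{\rho\in P_{n_1}}\ell(\rho)$, after replacing $\sum_{\sigma\in OP_{n_0}}\ell(\sigma)$ by $\sum_{\mu\in SP_{n_0}}\ell(\tilde\mu)$ via the Glaisher bijection. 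Finally, distributing these two sums over the pairs $(\mu,\nu)\in SP_{n_0}\times P_{n_1}\cong P_{n_0,n_1}$ turns the exponent into $\sum_{\lambda\in P_{n_0,n_1}}(\ell(\widetilde{\lambda^r})+\ell(\lambda^d)-\ell(\lambda^r))$. The main obstacle is this final $2$-power bookkeeping rather than any conceptual step: structurally, the proof reduces to the Kronecker factorization of $M_{n_0,n_1}$ together with the two orthogonality relations.
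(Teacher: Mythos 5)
Your computation is correct, and the final bookkeeping you describe does go through: after the $z_\sigma$- and $z_\rho$-cancellations the surviving exponent $|P_{n_1}|\sum_{\mu\in SP_{n_0}}(\ell(\tilde{\mu})-\ell(\mu))+|SP_{n_0}|\sum_{\rho\in P_{n_1}}\ell(\rho)$ is exactly $\sum_{\lambda\in P_{n_0,n_1}}(\ell(\tilde{\lambda^{r}})+\ell(\lambda^{d})-\ell(\lambda^{r}))$ under the bijection $\lambda\leftrightarrow(\lambda^{r},\lambda^{d})$. Your route, however, is genuinely different from the paper's. The paper never evaluates $\det M(p,W)_n$ or its blocks: it inserts $I=L_nM(p,\tilde{p})_n$ to rewrite ${}^tA_nA_n={}^tM(p,W)_n\,{}^tL_nR_n^{-1}M(\tilde{p},V)_nM(V,W)_n$, and then uses the duality $\langle W_\lambda,V_\mu\rangle_{-1}=\delta_{\lambda\mu}$ of Corollary 4.2, in the form $\langle p_\rho,\tilde{p}_\sigma\rangle_{-1}=2^{\ell(\sigma)-\ell(\rho)}z_\rho\delta_{\rho\sigma}$, to conclude that $M(\tilde{p},V)_n\,{}^tM(p,W)_n=R_n$; the two unknown determinants thereby cancel against $R_n^{-1}$ in one stroke, and the block determinant collapses to the product of the diagonal factors $L_n$ and $M(V,W)_n=\mathrm{diag}(2^{-\ell(\lambda^{r})})$ restricted to the block, i.e.\ $\prod 2^{\ell(\sigma)+\ell(\rho)}\cdot\prod 2^{-\ell(\lambda^{r})}$, which is the stated power of $2$. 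Your Kronecker factorization $M_{n_0,n_1}=(Y_{n_0}D_{n_0})\otimes X_{n_1}$ together with the two orthogonality relations is a legitimate substitute: it costs you the separate evaluations of $(\det Y_{n_0})^2$ and $(\det X_{n_1})^2$ and some extra cancellation of $z$-factors, but it buys the individual block determinants $\det M_{n_0,n_1}$ explicitly, which the paper's telescoping never exposes. Note also that the relation ${}^tY_{n_0}\,\mathrm{diag}(2^{\ell(\sigma)}/z_\sigma)\,Y_{n_0}=\mathrm{diag}(2^{\ell(\mu)})$ you invoke is precisely the $\Gamma$-component of the same $W$--$V$ duality the paper uses, so the two arguments rest on the same underlying facts, organized differently.
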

%
\begin{proof}
We have
\begin{align*}
^{t}A_{n}A_{n}&={}^tM(p,W)_{n} {}^tL_{n}R_{n}^{-1}L_n M(p,W)_{n}\\
&={}^tM(p,W)_{n} {}^tL_{n}R_{n}^{-1}L_n M(p,\tilde{p})_{n}M(\tilde{p},V)_{n}M(V,W)_{n}\\
&={}^tM(p,W)_{n} {}^tL_{n}R_{n}^{-1}M(\tilde{p},V)_{n}M(V,W)_{n},
\end{align*}
where $\tilde{p}_{\rho}(x)=p_{\rho}(x,x).$
Note that $^{t}L_{n}$, $R_{n}^{-1}$, $M(V,W)_{n}$ are diagonal matrices. By requiring that $\langle p_{\rho},p_{\sigma}\rangle_{-1}=2^{-\ell(\rho)}z_{\rho}\delta_{\rho \sigma}$, we obtain $\langle p_{\rho},\tilde{p_{\sigma}}\rangle_{-1}=2^{\ell(\sigma)-\ell(\rho)}z_{\rho}\delta_{\rho \sigma}$. Hence, $$\det(M(\tilde{p},V)_{n}\ ^{t}M(p,W)_{n}Z_{n}^{-1})=\det I.$$
We recall that $^{t}A_{n}A_{n}$ is block diagonal. We have  \begin{align*}
|\det B_{n_{0},n_{1}}|
&=2^{ \sum_{\lambda \in P_{n_{0},n_{1}}}(\ell(\tilde{\lambda^{r}})+\ell(\lambda^{d})-\ell(\lambda^{r}))}. 
\end{align*}
\end{proof}

For the principal block $B_{n,0}$, we have 
$$|\det B_{n,0}|=2^{\sum_{\lambda \in SP_n}(\ell(\tilde{\lambda})-\ell(\lambda))}.$$
\ \ \ We conclude this note with an inner product expression of ${}^tA_{n}A_{n}$. 
\begin{proposition}
$${}^tA_{n}A_{n}=\bigg(
\langle P_{\lambda^{r}}(x),P_{\mu^r}(x) \rangle_{0}\langle S_{\lambda^{d}}(x^2),S_{\mu^d}(x^2)\rangle_{0}
\bigg)_{\lambda,\mu}.
$$
\end{proposition}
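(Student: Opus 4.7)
My plan is to identify the entry $({}^tA_n A_n)_{\mu\nu}$ as a Gram matrix entry $\langle V_\mu, V_\nu\rangle_0$, and then exploit the tensor decomposition $\Lambda \cong \Gamma\otimes\Gamma'$ to factor this inner product as the product on the right-hand side.

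To realize the first step, I will reuse the identity
$$a_{\lambda\mu} = \langle V_\mu(y), S_\lambda(y)\rangle_0$$
already derived in the proof of Theorem~4.3 from the Cauchy identity of Proposition~4.1. Since the Schur basis $\{S_\lambda(y)\}_{\lambda \in P_n}$ is orthonormal with respect to $\langle\cdot,\cdot\rangle_0$ (the standard Hall orthonormality, which one checks in the present normalization from $\langle p_r, p_s\rangle_0 = r\delta_{rs}$), Parseval's identity gives at once
$$({}^tA_n A_n)_{\mu\nu} = \sum_{\lambda \in P_n}a_{\lambda\mu}a_{\lambda\nu} = \sum_{\lambda \in P_n}\langle V_\mu, S_\lambda\rangle_0\langle S_\lambda, V_\nu\rangle_0 = \langle V_\mu(x), V_\nu(x)\rangle_0.$$

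For the factorization step, I will observe that because $\mu^r$ is strict, $P_{\mu^r}(x)$ is a polynomial in the odd power sums and hence lies in $\Gamma$; while $S_{\mu^d}(x^2)$ is a polynomial in $p_r(x^2) = p_{2r}(x)$ and hence lies in $\Gamma'$. The inner product $\langle\cdot,\cdot\rangle_0$ is multiplicative across the tensor decomposition $\Lambda \cong \Gamma\otimes\Gamma'$: writing any $p_\rho$ uniquely as $p_{\rho_{\mathrm{odd}}}\cdot p_{\rho_{\mathrm{even}}}$ and using $z_\rho = z_{\rho_{\mathrm{odd}}}\, z_{\rho_{\mathrm{even}}}$, one obtains
$$\langle fg,\, f'g'\rangle_0 = \langle f, f'\rangle_0\,\langle g, g'\rangle_0$$
for $f, f' \in \Gamma$ and $g, g' \in \Gamma'$. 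Applying this with $f = P_{\mu^r}$, $g = S_{\mu^d}(x^2)$ and $f', g'$ analogously for $\nu$ yields the claim.

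The only step that requires genuine verification is the multiplicativity of $\langle\cdot,\cdot\rangle_0$ across $\Gamma\otimes\Gamma'$, and this is an immediate consequence of the diagonal form of $\langle p_\rho, p_\sigma\rangle_0$ on the power-sum basis together with the clean split of $p_\rho$ into odd and even factors. In particular there is no serious obstacle: the proposition is essentially a direct repackaging of Proposition~4.1 and Corollary~4.2.
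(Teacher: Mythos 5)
Your proof is correct, and it takes a genuinely different route from the paper's. You identify ${}^tA_nA_n$ as the Gram matrix $\left(\langle V_\lambda(x),V_\mu(x)\rangle_0\right)_{\lambda,\mu}$ by combining the identity $a_{\lambda\mu}=\langle V_\mu(y),S_\lambda(y)\rangle_0$ from the proof of Theorem 4.3 with Parseval's identity for the orthonormal Schur basis, and then factor this inner product across $\Lambda\cong\Gamma\otimes\Gamma'$, using that $P_{\mu^r}(x)\in\Gamma$, $S_{\mu^d}(x^2)\in\Gamma'$, and that $\langle\ ,\ \rangle_0$ is multiplicative over the odd/even splitting of power sums (via $z_\rho=z_{\rho_{\mathrm{odd}}}z_{\rho_{\mathrm{even}}}$). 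The paper instead starts from the matrix identity ${}^tA_nA_n={}^tM(p,W)_n\,{}^tL_nR_n^{-1}L_n\,M(p,W)_n$ established earlier in Section 4, substitutes the Frobenius-type expansion $p_\sigma p_{2\rho}=\sum_\lambda 2^{-\ell(\lambda^r)}X_\sigma^{\lambda^r}\chi_\rho^{\lambda^d}W_\lambda(x)$, and evaluates the resulting double sum over $(\sigma,\rho)$, recognizing the two factors as $2^{-\ell(\lambda^r)-\ell(\mu^r)}\langle Q_{\lambda^r},Q_{\mu^r}\rangle_0$ and $\langle S_{\lambda^d}(x^2),S_{\mu^d}(x^2)\rangle_0$ before converting $Q$ to $P$. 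The underlying content is the same in both arguments, namely the orthogonal decomposition of the power-sum basis into odd and even parts, but your version is shorter and more conceptual: it bypasses the Green functions $X_\sigma^{\lambda}$ and the explicit character sums entirely. What the paper's computation through $M(p,W)_n$ buys in exchange is that the same factorization simultaneously exhibits the block-diagonal structure of ${}^tA_nA_n$ indexed by the pairs $(n_0,n_1)$, which is needed for Theorem 4.7; your argument, taken on its own, would require the tensor-factorization observation again to recover that block structure. The one hypothesis you rely on, orthonormality and completeness of $\{S_\lambda\}_{\lambda\in P_n}$ for $\langle\ ,\ \rangle_0$ in the paper's normalization, is correctly noted and does follow from $\langle p_\lambda,p_\mu\rangle_0=z_\lambda\delta_{\lambda\mu}$ together with the orthogonality relations for symmetric group characters.
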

\begin{proof}
We have already given
 \begin{align*}
{}^tA_{n}A_{n}&=
 {}^tM(p,W)_{n}L_{n}^{2}R_{n}^{-1}M(p,W)_{n}.
 \end{align*}
 Hence
\begin{align*}
\sum_{\sigma,\rho}&2^{-\ell(\lambda^{r})-\ell(\mu^r)}
X_{\sigma}^{\lambda^{r}}X_{\sigma}^{\mu^r}
\chi_{\rho}^{\lambda^{d}}\chi_{\rho}^{\mu^d}
2^{2\ell(\sigma)+2\ell(\rho)}
z_{\sigma}^{-1}
z_{2\rho}^{-1}\\
&=2^{-\ell(\lambda^{r})-\ell(\mu^r)}
\sum_{\sigma,\rho}
(2^{2\ell(\sigma)}X_{\sigma}^{\lambda^{r}}X_{\sigma}^{\mu^r}z_{\sigma}^{-1})
(2^{\ell(\rho)}\chi_{\rho}^{\lambda^{d}}\chi_{\rho}^{\mu^d}z_{\rho}^{-1})\\
&=2^{-\ell(\lambda^{r})-\ell(\mu^r)}
\langle Q_{\lambda^{r}},Q_{\mu^r} \rangle_{0}
\langle S_{\lambda^{d}}(x^2),S_{\mu^d}(x^2)\rangle_{0}.
 \end{align*}
\end{proof}

\end{document}